
\documentclass[11pt, a4paper]{article}

\usepackage{amsmath, amssymb, amsthm, xspace, a4wide, amscd}
\usepackage[english]{babel}
\usepackage[T1]{fontenc}
\usepackage[latin1]{inputenc}

\textwidth=17cm \textheight=25cm

\voffset=-2cm

\sloppy

\newtheorem{thm}{Theorem}
\newtheorem{lem}{Lemma}

\newtheorem{ass}{Assumption A\hspace*{-4pt}}

\theoremstyle{definition}
\newtheorem{definition}{Definition}

\sloppy

\newcommand{\rr}{\ensuremath{\mathbb{R}}}
\newcommand{\zz}{\ensuremath{\mathbb{Z}}}

\begin{document}

\begin{center}\Large
Averaging principle for the heat equation driven by a general stochastic measure\footnote{The final version will be published in ''Statistics and Probability Letters''}
\end{center}

\begin{center}
Vadym Radchenko
\end{center}

\emph{2010 Mathematics Subject Classification}: 60H15; 60G57

\emph{Keywords}: Averaging principle; Stochastic heat equation; Stochastic measure;  Mild solution; Besov space

\begin{abstract}
We study the one-dimensional stochastic heat equation in the mild form driven by a general stochastic measure $\mu$, for $\mu$ we assume only $\sigma$-additivity in probability. The time averaging of the equation is considered, uniform a.~s. convergence to the solution of the averaged equation is obtained.
\end{abstract}

\section{Introduction}
\label{scintr}

Averaging is an important tool for investigation of dynamical systems. It helps to describe main part of the behavior of solutions to equations.

The averaging principle for stochastic partial differential equations was investigated for different types of equations with different stochastic integrators. For example, two-time-scales system driven by two independent Wiener processes was considered in~\cite{fudu11},  the infinite-dimensional case was studied by~\cite{cerfre}, \cite{breh12}, \cite{wang12}. Weak convergence in the averaging scheme was investigated by \cite{cerr09}, \cite{fu17}.

\cite{peis17} considered the system with the slow component driven by a fractional Brownian motion, case with Poisson random measure was studied in~\cite{peip17}, with $\alpha$-stable noise -- in \cite{bao17}.

In these and other studies, stochastic processes should have finite moments or satisfy some regularity conditions. We will study the case of the more general integrator.

In this paper we consider convergence of the solutions of the one-dimensional stochastic heat equations, which can formally be written as
\begin{equation}
\label{eqhshf} 
du_{\varepsilon}(t, x)=\frac{\partial^2 u_{\varepsilon}(t, x)}{\partial x^2}dt +f(x, u_{\varepsilon}(t, x))dt+ \sigma(t/ \varepsilon,x)d\mu(x),\
u_{\varepsilon}(0, x)=u_0(x)\, .
\end{equation}
Here $\varepsilon>0$, $(t, x)\in [0, T]\times\rr$, and $\mu$ is a stochastic measure defined on Borel
$\sigma$-algebra on $\rr$. For $\mu$ we assume $\sigma$-additivity in probability only, assumptions for $f,\ \sigma$
and $u_0$ are given in Section~\ref{scprob}.

We will study convergence $u_{\varepsilon}(t, x)\to\bar{u}(t, x),\  \varepsilon\to 0$,
were $\bar{u}$ is the solution of the averaged equation
\begin{equation}
\label{eqhshfav} 
d\bar{u}(t, x)=\frac{\partial^2 \bar{u}(t, x)}{\partial x^2}dt +{f}(x, \bar{u}(t, x))dt+ \bar{\sigma}(x)\,d{\mu}(x) ,\
\bar{u}(0, x)=u_0(x)\, .
\end{equation}
We consider solutions to the formal equations~(\ref{eqhshf}) and (\ref{eqhshfav}) in the mild
form (see~(\ref{eqhsife}) and~(\ref{eqhsifb}) below), $\bar{\sigma}$ is defined in~(\ref{eqbarf}).

In comparison with the papers mentioned above, we do not have a special equation for the fast component and study the case of additive noise. The reason is that we cannot properly define the integral of random function with respect to $\mu$ in general case.

Existence, uniqueness, and some regularity properties
of mild solution of~(\ref{eqhshf}) were obtained in~\cite{rads09}. Generalization for the parabolic equation, driven by a stochastic measure was obtained in~\cite{bodumj}. Some other stochastic equations with such integrator were studied in \cite{radc14,radt15}.

The rest of this paper is organized as follows. In Section~\ref{scprel} we give the basic facts about stochastic measures. Section~\ref{scprob} contains the exact formulation of the problem and our assumptions. Formulation and proof of the main result are given in Section~\ref{scaver}.

\section{Preliminaries}
\label{scprel}

Let ${\sf L}_0={\sf L}_0(\Omega, {\mathcal F}, {\sf P} )$ be the set of all real-valued
random variables defined on the complete probability space $(\Omega, {\mathcal F}, {\sf P} )$. Convergence in ${\sf L}_0$ means the convergence in probability. Let ${\mathcal{B}}$ be a Borel $\sigma$-algebra on ${\mathbb R}$.

\begin{definition}
A $\sigma$-additive mapping $\mu:\ {\mathcal{B}}\to {\sf L}_0$ is called {\em stochastic measure} (SM).
\end{definition}

In other words, $\mu$ is a vector measure with values in ${\sf L}_0$. We do not assume additional measurability conditions, positivity or moment existence
for SM.

For a deterministic measurable function $g:\rr\to\rr$, $A\in{\mathcal{B}}$ and SM~$\mu$, an integral of the form $\int_{A}g\,d\mu$ is defined and studied in~\cite[Chapter 7]{kwawoy}. In particular, every bounded measurable $g$ is integrable with respect to (w.~r.~t.) any~$\mu$. An analog of the Lebesgue dominated convergence theorem holds for
this integral, see \cite[Proposition 7.1.1]{kwawoy}.

Examples of SMs are the following. Let $M_{t}$ be a square integrable martingale. Then $\mu(A)=\int_0^T {\bf 1}_{A}(t)\,dM_{t}$ is an SM.
If $W^H_{t}$ is a fractional Brownian motion with Hurst index~$H>1/2$ and $g: [0,
T]\to\mathbb{R}$ is a bounded measurable function then $\mu(A)=\int_0^T g(t){\bf 1}_{A}(t)\,dW^H_{t}$ is an SM, as follows from~\cite[Theorem~1.1]{memiva}.
An $\alpha$-stable random measure defined on ${\mathcal{B}}$ is an SM too, see \cite[Chapter 3]{samtaq}.

By $C$ and $C(\omega)$ we will denote a positive finite constants and positive random finite constants respectively whose exact values are not important.

We will use the following statement.

\begin{lem} \label{lmfkmu} (Lemma~3.1 in~\cite{rads09})
Let $\phi_l:\ {\rr}\to \rr,\ l\ge 1$ be measurable functions such that
$ \tilde{\phi}(x)=\sum_{l=1}^{\infty} \left|{\phi_l}(x)\right|$ is integrable w.r.t.~$\mu$. Then
$\sum_{l=1}^{\infty}\left(\int_{\rr} \phi_l\,d\mu \right)^2<\infty\ \mbox{\textrm ~a.~s.}$
\end{lem}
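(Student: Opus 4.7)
The plan is to reduce the statement to a classical fact on series of independent symmetric random variables via a randomization argument. I would enlarge the probability space by an auxiliary factor $(\Omega',\mathcal{F}',{\sf P}')$ carrying a Rademacher sequence $\{\xi_l\}_{l\ge 1}$ of independent symmetric $\pm 1$ random variables, independent of $\mu$. Set $a_l(\omega)=\int_{\rr}\phi_l\,d\mu$, so the claim becomes $\sum_{l\ge 1}a_l^2<\infty$ ${\sf P}$-a.~s.

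\emph{Step 1 (randomize inside the stochastic integral).} Since $\tilde\phi$ is $\mu$-integrable it is real-valued, so for every $x$ the series $\sum_l\xi_l(\omega')\phi_l(x)$ converges absolutely and its sum $\Phi(\omega',x)$ satisfies $|\Phi(\omega',x)|\le\tilde\phi(x)$. Linearity of the SM integral gives
\begin{equation*}
S_n(\omega,\omega'):=\sum_{l=1}^n\xi_l(\omega')a_l(\omega)=\int_{\rr}\Big(\sum_{l=1}^n\xi_l(\omega')\phi_l\Big)d\mu .
\end{equation*}
For each fixed $\omega'$ the integrands converge pointwise to $\Phi(\omega',\cdot)$ and are dominated by the $\mu$-integrable function $\tilde\phi$, so Proposition 7.1.1 in~\cite{kwawoy} yields $S_n(\cdot,\omega')\to\int_{\rr}\Phi(\omega',\cdot)\,d\mu$ in ${\sf P}$-probability. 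Integrating the bounded probabilistic distance $1\wedge|S_n-S_\infty|$ against ${\sf P}'$ (ordinary dominated convergence) upgrades this to convergence of $S_n$ in the product probability ${\sf P}\otimes{\sf P}'$.

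\emph{Step 2 (extract an a.~s.~statement about the scalar series).} Choose a subsequence $\{n_k\}$ along which $S_{n_k}$ converges $({\sf P}\otimes{\sf P}')$-a.~s. By Fubini, for ${\sf P}$-a.~a.~$\omega$ the series $\sum_{l=1}^{n_k}\xi_l(\omega')a_l(\omega)$ converges ${\sf P}'$-a.~s.~along $\{n_k\}$. For such an $\omega$ the summands $\xi_l(\omega')a_l(\omega)$ are independent and symmetric in $\omega'$, so the L\'evy / It\^o--Nisio equivalence theorem for sums of independent symmetric random variables promotes subsequential a.~s.~convergence to a.~s.~convergence of the full Rademacher series $\sum_l\xi_l(\omega')a_l(\omega)$. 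The Kolmogorov three-series theorem (after a routine truncation at level $1$ to absorb possibly large $|a_l(\omega)|$) then forces $\sum_l a_l(\omega)^2<\infty$ for ${\sf P}$-a.~a.~$\omega$, which is exactly the claim.

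The main obstacle I anticipate is the interface in Step~2: converting the product-probability convergence produced by Step~1 into a pointwise-in-$\omega$ statement about the scalar Rademacher series (requiring a subsequence plus the L\'evy maximal inequality to undo the subsequencing), and then legitimising the three-series theorem without any a priori bound on $a_l(\omega)$. Step~1, by contrast, is essentially bookkeeping once one observes that the choice of Rademacher (rather than, e.g., Gaussian) randomisation is what preserves the domination $|\sum\xi_l\phi_l|\le\tilde\phi$ needed to invoke the SM dominated convergence theorem.
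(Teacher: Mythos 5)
The paper does not prove this lemma at all --- it is imported verbatim as Lemma~3.1 of \cite{rads09} --- so there is no in-paper argument to compare against; your reconstruction, however, is correct and follows the same standard route as the cited source: Rademacher randomization inside the SM integral, the dominated convergence theorem for $L_0$-valued vector measures (Proposition~7.1.1 of \cite{kwawoy}), a Fubini reduction to a scalar Rademacher series for a.e.\ fixed $\omega$, and the L\'evy/It\^o--Nisio plus three-series machinery to conclude $\sum_l a_l^2<\infty$. The only point worth tightening is in Step~1: rather than asserting convergence of $S_n$ to a jointly measurable limit $S_\infty$ under ${\sf P}\otimes{\sf P}'$, phrase it as $S_n$ being Cauchy in ${\sf P}\otimes{\sf P}'$-probability (integrating $1\wedge|S_n-S_m|$ over $\omega'$), which sidesteps any measurability question about the limit and feeds directly into the subsequence extraction of Step~2.
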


We will consider the Besov spaces $B^\alpha_{22}([c, d])$. Recall that the norm in this classical space for
$0<\alpha< 1$ may be introduced by
\begin{equation}
\label{eqbssn}
\|g\|_{B^\alpha_{22}([c, d])}=\|g\|_{L_{2}([c, d])}+\left(\int_0^{d-c}\ {\left(w_2(g, r)\right)^2}{r^{-2\alpha-1}}
\,dr\right)^{1/2},
\end{equation}
where
\[
w_2(g, r)=\sup_{0\le h\le r}\left(\int_{c}^{d-h} \left|g(y+h)-g(y)\right|^2\,dy\right)^{1/2}
\]
(see~\cite{kamont}). For any $j\in \zz$ and all $n\ge 0$, put
\[
d_{kn}^{(j)}=j+k 2^{-n},\quad 0\le k\le 2^n,\quad \Delta_{kn}^{(j)}=\left(d_{(k-1)n}^{(j)}, d_{kn}^{(j)}\right],\quad
1\le k\le 2^n\, .
\]

The following lemma is a key tool for estimates of the stochastic integral.

\begin{lem}\label{lmessih} (Lemma~3 in~\cite{radt15})
Let $Z$ be an arbitrary set, and function $q(z,s):
Z\times [j, j+1]\to\rr$ is such that all paths $q(z,\cdot)$ are continuous on $[j,j+1]$. Denote
$$
q_n(z,s)=q\big(z,j\big){\bf 1}_{\{j\}}(s)+\sum_{1\le k\le 2^n}q\big(z,{d}_{(k-1)n}^{(j)}\big){\bf 1}_{\Delta_{kn}^{(j)}}(s).
$$
Then the random function
$
\eta(z)=\int_{(j,j+1]}\,q(z, s)\,d\mu(s),\ z\in Z,
$
has a version
\begin{eqnarray*}
\widetilde{\eta}(z)=\int\limits_{(j,j+1]} \,q_0(z,s)\,d\mu(s)
 +\sum_{n\ge 1}
\Bigl(\int\limits_{(j,j+1]} \,q_n(z,s)\,d\mu(s)-\int\limits_{(j,j+1]}\,q_{n-1}(z,s)\,d\mu(s)\Bigr)
\end{eqnarray*}
such that for all $\beta>0$, $\omega\in\Omega$, $z\in Z$
\begin{eqnarray}
|\widetilde{\eta}(z)|\le|q(z,j)\mu((j,j+1])|\nonumber\\
 + \Bigl\{\sum_{n\ge 1}2^{n\beta}
\sum_{1\le k\le 2^{n}}|q(z,d_{kn}^{(j)})-q(z,d_{(k-1)n}^{(j)})|^2\Bigr\}^{1/2}\nonumber\\
\times\Bigl\{\sum_{n\ge 1}2^{-n\beta}\sum_{1\le k\le 2^{n}}|\mu(\Delta_{kn}^{(j)})|^2
\Bigr\}^{1/2}.\label{eqesqm}
\end{eqnarray}
\end{lem}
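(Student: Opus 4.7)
My plan is to build $\widetilde{\eta}$ as the limit of a dyadic telescoping sum and to control each layer by a discrete Cauchy--Schwarz estimate, in the spirit of Kamont-type arguments for Besov norms. Since $q(z,\cdot)$ is continuous on the compact interval $[j,j+1]$, the step functions $q_n(z,\cdot)$ converge to $q(z,\cdot)$ uniformly on $(j,j+1]$ and are uniformly bounded. The dominated convergence theorem for integrals w.r.t.\ a stochastic measure, \cite[Proposition~7.1.1]{kwawoy}, then gives
\[
\int_{(j,j+1]} q_n(z,s)\,d\mu(s)\ \longrightarrow\ \int_{(j,j+1]} q(z,s)\,d\mu(s)\ \text{ in probability.}
\]
Rewriting the left side as $\int q_0\,d\mu+\sum_{n\ge 1}(\int q_n\,d\mu-\int q_{n-1}\,d\mu)$ motivates the definition of $\widetilde{\eta}$; the initial block $\int_{(j,j+1]} q_0(z,s)\,d\mu(s)=q(z,j)\mu((j,j+1])$ furnishes the first summand of~(\ref{eqesqm}).

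Next I examine the $n$-th layer pathwise. On $\Delta_{kn}^{(j)}$, the functions $q_n$ and $q_{n-1}$ agree when $k$ is odd (both evaluate $q(z,\cdot)$ at the left endpoint of the surrounding level $n-1$ interval), while for $k=2m$ they differ by $q(z,d_{(2m-1)n}^{(j)})-q(z,d_{(2m-2)n}^{(j)})$. Therefore
\[
\int_{(j,j+1]}\!(q_n-q_{n-1})\,d\mu=\sum_{1\le m\le 2^{n-1}}\bigl(q(z,d_{(2m-1)n}^{(j)})-q(z,d_{(2m-2)n}^{(j)})\bigr)\mu(\Delta_{(2m)n}^{(j)}),
\]
and Cauchy--Schwarz in $m$, followed by the trivial comparison with the full sum $\sum_{1\le k\le 2^{n}}$, yields
\[
\Bigl|\int_{(j,j+1]}\!(q_n-q_{n-1})\,d\mu\Bigr|\le\Bigl(\sum_{1\le k\le 2^n}\!|q(z,d_{kn}^{(j)})-q(z,d_{(k-1)n}^{(j)})|^2\Bigr)^{1/2}\Bigl(\sum_{1\le k\le 2^n}\!|\mu(\Delta_{kn}^{(j)})|^2\Bigr)^{1/2}.
\]
Summing over $n\ge 1$ and applying a second weighted Cauchy--Schwarz with weights $2^{\pm n\beta/2}$ reproduces exactly the product in~(\ref{eqesqm}).

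It remains to verify that the telescoping series defining $\widetilde{\eta}$ converges a.s., so that it is genuinely a version of $\eta$. Since the first factor above is a deterministic finite sum, everything reduces to the a.s.\ finiteness of the second factor $\sum_{n\ge 1}2^{-n\beta}\sum_{k}|\mu(\Delta_{kn}^{(j)})|^2$. I enumerate the functions $2^{-n\beta/2}{\bf 1}_{\Delta_{kn}^{(j)}}$, $n\ge 1$, $1\le k\le 2^n$, as a single sequence $(\phi_l)$: for any $s\in(j,j+1]$ exactly one $k$ contributes at each level, so $\widetilde{\phi}(s)=\sum_l|\phi_l(s)|=\sum_{n\ge 1}2^{-n\beta/2}\le C_\beta<\infty$. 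This bounded function is integrable w.r.t.\ $\mu$, and Lemma~\ref{lmfkmu} delivers $\sum_l(\int\phi_l\,d\mu)^2=\sum_{n\ge 1}2^{-n\beta}\sum_k|\mu(\Delta_{kn}^{(j)})|^2<\infty$ a.s. I expect this last step to be the main obstacle: because $\mu$ is assumed only $\sigma$-additive in probability and may have no moments, one cannot access the dyadic squares by a direct $L^2$ calculation, and the packaging into Lemma~\ref{lmfkmu} is the only handle available.
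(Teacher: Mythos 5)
This lemma is imported verbatim from Lemma~3 of \cite{radt15}; the paper under review states it without proof, so there is no in-paper argument to compare against. Your reconstruction is correct and follows the standard route of the cited source: the telescoping dyadic decomposition, the observation that $q_n-q_{n-1}$ vanishes on odd-indexed level-$n$ intervals and contributes $q(z,d_{(2m-1)n}^{(j)})-q(z,d_{(2m-2)n}^{(j)})$ on even ones, the two applications of Cauchy--Schwarz (first in $k$, then in $n$ with weights $2^{\pm n\beta/2}$), the identification of $\int q_0\,d\mu$ with $q(z,j)\mu((j,j+1])$, and the use of Lemma~\ref{lmfkmu} applied to the bounded function $\sum_{n,k}2^{-n\beta/2}{\bf 1}_{\Delta_{kn}^{(j)}}$ to get a.~s.\ finiteness of the quadratic sum of $\mu$-values. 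One small imprecision: you say that a.~s.\ convergence of the series ``reduces to the second factor,'' but the first factor $\sum_{n\ge 1}2^{n\beta}\sum_{k}|q(z,d_{kn}^{(j)})-q(z,d_{(k-1)n}^{(j)})|^2$ is an infinite series that need not be finite for a merely continuous $q(z,\cdot)$; when it is infinite the bound (\ref{eqesqm}) is vacuous, and the version $\widetilde{\eta}(z)$ is still well defined through the convergence in probability you already obtained from \cite[Proposition~7.1.1]{kwawoy}, so nothing breaks, but it is worth stating explicitly.
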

Theorem~1.1 of~\cite{kamont} implies that for $\alpha=(\beta+1)/2$,
\begin{equation}\label{eqesbs}
 \Bigl\{\sum_{n\ge 1}2^{n\beta}
\sum_{1\le k\le 2^{n}}|q(z,d_{kn})-q(z,d_{(k-1)n})|^2\Bigr\}^{1/2}\le C \|q(z, \cdot)\|_{B^\alpha_{22}([j, j+1])}.
\end{equation}

\section{The problem}\label{scprob}

Consider the heat equation~(\ref{eqhshf}) in the following mild sense
\begin{eqnarray}
\label{eqhsife} u_{\varepsilon}(t, x)=\int_{\rr}{p}(t, x-y)u_0(y)\,dy\nonumber\\
+\int_0^t ds \int_{\rr}{p}(t-s, x-y)f(y, u_{\varepsilon}(s, y))\,dy\nonumber\\
+\int_{\rr} d\mu(y) \int_0^t {p}(t-s, x-y)\sigma(s / \varepsilon, y)\,ds\, .
\end{eqnarray}

Here ${p}(t,  x) = (2\sqrt{\pi t})^{-1}\, e^{-\frac{x^2}{4t}}$
is the Gaussian heat kernel, $u_{\varepsilon}(t, x)=u_{\varepsilon}(t, x, \omega):[0, T]\times{\rr}\times\Omega\to\rr$ is an unknown measurable
random function, $\mu$ is a stochastic measure defined on Borel $\sigma-$algebra of $\rr$. For each $(t,x)$ equation~(\ref{eqhsife}) holds a.~s.

We make the following assumptions throughout the paper.

\begin{ass}\label{ass1} $u_0(y)=u_0(y, \omega):{\rr}\times\Omega\to\rr$ is measurable and bounded,
\[
|u_0(y)|\le C(\omega),\quad \left|u_0(y_1)-u_0(y_2)\right|\le C(\omega)\left|y_1-y_2\right|^{\beta(u_0)},\quad \beta(u_0)\ge 1/6\, .
\]
\end{ass}

\begin{ass}\label{ass3} $f(y, z):{\rr}^2\to\rr$ is measurable, bounded, and
\[
\left|f(y_1, z_1)-f(y_2, z_2)\right|\le L_f \left(\left|y_1-y_2\right| + \left|z_1-z_2\right|\right)\, .
\]
\end{ass}

\begin{ass}\label{ass5} $\sigma(s, y):{\rr_+}\times{\rr}\to\rr$ is measurable, bounded, and
\[
\left|\sigma(s, y_1)-\sigma(s, y_2)\right|\le L_{\sigma}\left|y_1-y_2\right|^{\beta(\sigma)},\quad 1/2<\beta(\sigma)<1.
\]
\end{ass}

\begin{ass}\label{ass6}
$|y|^{\tau}$ is integrable w.r.t. $\mu$ on $\rr$ for some $\tau>5/2$.
\end{ass}

Recall that for some $C,\ \lambda >0$
\begin{equation}\label{estpx}
\left|\frac{\partial p(t,x)}{\partial x}\right|\leq
\frac{C}{t}e^{-\frac{\lambda x^2}{t}}.
\end{equation}

Assume that there exist the following limit
\begin{equation}\label{eqbarf}
\bar{\sigma}(y)=\lim_{t\to\infty}\frac{1}{t}\int_0^t \sigma(s,y)\,ds.
\end{equation}
It is easy to see that $\bar{\sigma}(y)$ satisfies Assumption~A\ref{ass5}.

We will consider solution $\bar{u}$ to the equation
\begin{eqnarray}
\label{eqhsifb}
\bar{u}(t, x)=\int_{\rr}{p}(t, x-y)u_0(y)\,dy+\int_0^t ds \int_{\rr}{p}(t-s, x-y){f}(y, \bar{u}(s, y))\,dy\nonumber\\
+\int_{\rr} d\mu(y) \int_0^t {p}(t-s, x-y)\bar{\sigma}(y)\,ds\, ,
\end{eqnarray}
that is a mild form of~(\ref{eqhshfav}).

Theorem of~\cite{rads09} and Assumptions A\ref{ass1}--A\ref{ass6} give that solutions of~(\ref{eqhsife}) and~(\ref{eqhsifb}) exist, are unique, and have continuous in $(t,x)$ versions.

We also impose the following additional condition.

\begin{ass}\label{ass8}
Function $G_{\sigma}(r,y)=\int_0^{r} (\sigma(s,y)-\bar{\sigma}(y))\,ds,\ r\in\rr_+,\ y\in\rr$
is bounded.
\end{ass}

This holds, for example, if $\sigma(s,y)$ is periodic in $s$ for each fixed $y$, and the set of values of minimal periods is bounded.

\section{Averaging principle}\label{scaver}

\begin{lem}\label{lmpers}
Let $h(r, y)$ and $\bar{h}(y)$ be measurable and functions
\begin{eqnarray*}
H(r,y)=h(r, y)-\bar{h}(y),\ G(r,y)=\int_0^{r} (h(v, y)-\bar{h}(y))dv,\
r\in\rr_+,\ y\in\rr^d
\end{eqnarray*}
are bounded. Then
$$
\sup_{y\in\rr^d,D>0,\varepsilon>0,t\in (0,T]}\Bigl|\frac{1}{\sqrt{\varepsilon}}\int_0^t \frac{e^{-\frac{D}{t-s}}}{\sqrt{t-s}}\,(h(s/\varepsilon,
y)-\bar{h}(y))\,ds\Bigr|<+\infty.
$$
\end{lem}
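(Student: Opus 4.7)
The strategy is integration by parts, exploiting that an antiderivative of $s\mapsto h(s/\varepsilon,y)-\bar h(y)$ is $s\mapsto\varepsilon G(s/\varepsilon,y)$, which is bounded by a constant times $\varepsilon$. The extra factor $\varepsilon$ should absorb the $1/\sqrt{\varepsilon}$ in front (leaving a free $\sqrt{\varepsilon}$ that we need to spend on controlling the kernel). Set $\psi(s):=e^{-D/(t-s)}/\sqrt{t-s}$. Although $\psi(t^-)=0$ whenever $D>0$, the total variation of $\psi$ on $[0,t]$ grows like $1/\sqrt{D}$, so integrating by parts naively on the whole interval cannot produce a bound uniform in $D$ and $\varepsilon$ simultaneously.

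To remedy this I split at $t-\delta$ with $\delta:=\min(\varepsilon,t)$ and treat the two pieces separately. On $[t-\delta,t]$ I use only that $|h-\bar h|\le C$:
$$\Bigl|\int_{t-\delta}^t\psi(s)\bigl(h(s/\varepsilon,y)-\bar h(y)\bigr)\,ds\Bigr|\le C\int_0^\delta\frac{e^{-D/u}}{\sqrt{u}}\,du\le 2C\sqrt{\delta},$$
which contributes at most $2C\sqrt{\delta/\varepsilon}$ after division by $\sqrt{\varepsilon}$.

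On $[0,t-\delta]$ I integrate by parts with $F(s):=\varepsilon G(s/\varepsilon,y)$. Then $F(0)=0$, $|F|\le C\varepsilon$, and $F'(s)=h(s/\varepsilon,y)-\bar h(y)$, so the boundary at $s=0$ vanishes, the one at $s=t-\delta$ is bounded by $|\psi(t-\delta)|\cdot C\varepsilon\le C\varepsilon/\sqrt{\delta}$, and the remaining integral is bounded by $C\varepsilon$ times the total variation $V[\psi;0,t-\delta]$. Substituting $u=t-s$ reduces this to $V[\tilde\psi;\delta,t]$ for $\tilde\psi(u):=e^{-D/u}/\sqrt{u}$. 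Differentiation gives $\tilde\psi'(u)=u^{-5/2}e^{-D/u}(D-u/2)$, so $\tilde\psi$ is unimodal with maximum $e^{-1/2}/\sqrt{2D}$ at $u=2D$; splitting into the three cases $2D\le\delta$ (monotone decreasing), $\delta<2D<t$ (peak inside), and $2D\ge t$ (monotone increasing), one checks in each that $V[\tilde\psi;\delta,t]\le C/\sqrt{\delta}$ (in the middle case using $2D>\delta$ to convert $1/\sqrt{2D}$ into $1/\sqrt{\delta}$). Hence this piece, divided by $\sqrt{\varepsilon}$, contributes at most $C\sqrt{\varepsilon/\delta}$.

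Adding the two contributions yields the bound $C(\sqrt{\delta/\varepsilon}+\sqrt{\varepsilon/\delta})$. With $\delta=\min(\varepsilon,t)$: if $\varepsilon\le t$ then $\delta=\varepsilon$ and both ratios equal $1$; if $\varepsilon>t$ then $\delta=t$ makes $[0,t-\delta]$ degenerate, so only the direct estimate on $[0,t]$ is invoked, giving $2C\sqrt{t/\varepsilon}\le 2C$. The bound is therefore independent of $y$, $D$, $\varepsilon$ and $t$. The delicate step is the uniform total-variation estimate for $\tilde\psi$, where the singularities at $u\to 0$ and at $D\to 0$ must be controlled simultaneously; the choice $\delta=\min(\varepsilon,t)$ is exactly what balances the direct bound against the integration-by-parts bound.
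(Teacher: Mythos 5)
Your proof is correct and follows essentially the same route as the paper's: split the integral at distance of order $\varepsilon$ from the endpoint $s=t$, bound the near piece directly using boundedness of $h-\bar h$ (gaining the factor $\sqrt{\varepsilon}$ from $\int_0^{\varepsilon}u^{-1/2}\,du$), and integrate the far piece by parts against the bounded antiderivative supplied by $G$. The paper first substitutes $v=(t-s)/\varepsilon$ and controls the derivative term via $\max_{D>0}De^{-D/(v\varepsilon)}=v\varepsilon e^{-1}$ rather than your unimodality/total-variation analysis of $e^{-D/u}/\sqrt{u}$, and it glosses over the edge case $\varepsilon>t$ that you handle explicitly, but these differences are cosmetic.
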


\begin{proof} Using the substitution $v=(t-s)/\varepsilon$, we obtain
\begin{eqnarray*}
\int_0^t \frac{e^{-\frac{D}{t-s}}}{\sqrt{t-s}}\left({h}(s/\varepsilon, y)-\bar{h}(y)\right)\,ds
{=}
\sqrt{\varepsilon}\int_0^{t/\varepsilon}
\frac{e^{-\frac{D}{v\varepsilon}}}{\sqrt{v}}\,(h(t/\varepsilon-v, y)-\bar{h}(y))\,dv.
\end{eqnarray*}
Let $|h(r, y)-\bar{h}(y)|\le C_h$, and  we denote
\[
F_{\varepsilon}(r)=\int_0^{r} (h(t/\varepsilon-v, y)-\bar{h}(y))\,dv,\quad 0\le r\le t/\varepsilon.
\]
Then $F_{\varepsilon}(r)$ is bounded, $|F_{\varepsilon}(r)|\le C_F$, $C_F$ does not depend of~$\varepsilon$. We have
\begin{eqnarray*}
\Bigl|\int_0^{1}\frac{e^{-\frac{D}{v\varepsilon}}}{\sqrt{v}}\,(h(t/\varepsilon-v, y)-\bar{h}(y))\,dv\Bigr|\le C_h\int_0^{1}\frac{dv}{\sqrt{v}}=C,\\
\Bigl|\int_1^{t/\varepsilon}
\frac{e^{-\frac{D}{v\varepsilon}}}{\sqrt{v}}\,(h(t/\varepsilon-v, y)-\bar{h}(y))\,dv\Bigr|=\Bigl|\int_1^{t/\varepsilon}
\frac{e^{-\frac{D}{v\varepsilon}}}{\sqrt{v}}\,dF_{\varepsilon}(v)\Bigr|\\
=\Bigl|\frac{e^{-\frac{D}{v\varepsilon}}}{\sqrt{v}}F_{\varepsilon}(v)\Bigr|_1^{t/\varepsilon}
-\int_1^{t/\varepsilon}F_{\varepsilon}(v)\Bigl(\frac{De^{-\frac{D}{v\varepsilon}}}{\varepsilon\sqrt{v^{5}}}-
\frac{e^{-\frac{D}{v\varepsilon}}}{2\sqrt{v^{3}}}\Bigr)\,dv\Bigr|\\
\le 2C_F+C_F\int_1^{t/\varepsilon}\frac{De^{-\frac{D}{v\varepsilon}}}{\varepsilon\sqrt{v^{5}}}\,dv+C_F\int_1^{t/\varepsilon} \frac{1}{2\sqrt{v^{3}}}\,dv \stackrel{(*)}{\le}C,
\end{eqnarray*}
where in (*) we used that $\max_{D>0} De^{-\frac{D}{v\varepsilon}}={v\varepsilon}e^{-1}$.
\end{proof}

Note that function $H_{\sigma}(r,y)=\sigma(r,y)-\bar{\sigma}(y),\ r\in\rr_+,\ y\in\rr$
is bounded, as follows from A\ref{ass5}. The main result of our paper is the following.

\begin{thm} Assume than Assumptions A\ref{ass1}--A\ref{ass8} hold. Then there exist versions of  $u_{\varepsilon}$ and $\bar{u}$ such that for any $\gamma_1<\frac{1}{2}\Bigl(1-\frac{1}{2\beta(\sigma)}\Bigr)$
$$
\sup_{\varepsilon>0,t\in [0,T],x\in\rr} \varepsilon^{-\gamma_1}|u_{\varepsilon}(t, x)-\bar{u}(t, x)|<+\infty\ a.~s.
$$
\end{thm}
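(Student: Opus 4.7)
The plan is to separate the contributions from the drift and from the noise. Set $w_\varepsilon = u_\varepsilon - \bar u$ and split $w_\varepsilon = D_f + D_\sigma$, where
\begin{eqnarray*}
D_f(t,x) &=& \int_0^t ds \int_\rr p(t-s, x-y)\bigl[f(y, u_\varepsilon(s,y)) - f(y, \bar u(s,y))\bigr]\,dy, \\
D_\sigma(t,x) &=& \int_\rr q_\varepsilon(t, x, y)\,d\mu(y),\quad q_\varepsilon(t,x,y) := \int_0^t p(t-s, x-y)\bigl[\sigma(s/\varepsilon, y) - \bar\sigma(y)\bigr]\,ds.
\end{eqnarray*}
Assumption A\ref{ass3} together with $\int_\rr p(t-s, x-y)\,dy = 1$ gives the linear estimate $|D_f(t,x)| \le L_f \int_0^t \sup_{x'}|w_\varepsilon(s, x')|\,ds$, so Gronwall's inequality reduces the theorem to proving $\sup_{t,x}|D_\sigma(t,x)| \le C(\omega)\,\varepsilon^{\gamma_1}$ a.\,s.

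To control $D_\sigma$ I would establish two complementary bounds on $q_\varepsilon$. The pointwise bound $|q_\varepsilon(t,x,y)| \le C\sqrt{\varepsilon}$, uniform in $(t, x, y)$, follows from Lemma~\ref{lmpers} applied with $h(s, y) = \sigma(s, y)$ and $\bar h(y) = \bar\sigma(y)$: the required boundedness of $H_\sigma$ and $G_\sigma$ is supplied by A\ref{ass5} and A\ref{ass8}, and the representation $p(t-s, x-y) = (4\pi)^{-1/2}(t-s)^{-1/2}e^{-(x-y)^2/(4(t-s))}$ matches the hypothesis of the lemma with $D = (x-y)^2/4$. A H\"older-in-$y$ bound $|q_\varepsilon(t,x,y+h) - q_\varepsilon(t,x,y)| \le C|h|^{\beta(\sigma)}$ (for $|h| \le 1$) is obtained by splitting the increment into a part coming from the $y$-increment of $H_\sigma$, controlled by the H\"older condition in A\ref{ass5}, and a part coming from the spatial increment of $p$, handled via (\ref{estpx}) after splitting the time integral at $t - s = h^2$ (the resulting logarithmic factor is absorbed because $\beta(\sigma) < 1$). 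Combining gives $|q_\varepsilon(t,x,y+h) - q_\varepsilon(t,x,y)| \le C\min(\sqrt\varepsilon, |h|^{\beta(\sigma)})$.

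Next I would write $\int_\rr d\mu = \sum_{j \in \zz} \int_{(j, j+1]} d\mu$ and apply Lemma~\ref{lmessih} together with (\ref{eqesbs}) on each unit interval with $\alpha = (\beta + 1)/2$ for a small $\beta > 0$ to be chosen. The Besov norm of $q_\varepsilon(t, x, \cdot)$ is then estimated by splitting the $r$-integral in (\ref{eqbssn}) at $r_0 = \varepsilon^{1/(2\beta(\sigma))}$, and using the above interpolated bound on $w_2$ gives
\begin{eqnarray*}
\|q_\varepsilon(t,x,\cdot)\|_{B^\alpha_{22}([j, j+1])} \le C\,\varepsilon^{\frac{1}{2}\left(1 - \frac{\beta+1}{2\beta(\sigma)}\right)}
\end{eqnarray*}
uniformly in $t$, $x$ and $j$. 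This exponent tends to $\frac{1}{2}\bigl(1 - \frac{1}{2\beta(\sigma)}\bigr)$ as $\beta \downarrow 0$, so any target $\gamma_1$ strictly below this limit is reached by taking $\beta$ small enough, the constraint $0 < \beta < 2\beta(\sigma) - 1$ being compatible by A\ref{ass5}.

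Summing (\ref{eqesqm}) over $j$ produces a bound of the shape
\begin{eqnarray*}
\sup_{t, x}|D_\sigma(t, x)| \le C\sqrt\varepsilon \sum_j |\mu((j, j+1])| + C\,\varepsilon^{\gamma_1} \sum_j \Bigl\{\sum_{n \ge 1} 2^{-n\beta} \sum_{1 \le k \le 2^n} |\mu(\Delta_{kn}^{(j)})|^2\Bigr\}^{1/2}.
\end{eqnarray*}
Both random factors are almost surely finite: one applies Cauchy--Schwarz (in $j$, respectively in $n$ and $j$) and then Lemma~\ref{lmfkmu} to the families $\phi_j(y) = (1+|j|)^{\tau/2}\mathbf{1}_{(j, j+1]}(y)$ and $\phi_{j, n, k}(y) = (1 + |j|)^{\tau/2} 2^{-n\beta/2}\mathbf{1}_{\Delta_{kn}^{(j)}}(y)$, whose $\ell^1$-sums are dominated by $C(1 + |y|^{\tau/2})$ and are therefore $\mu$-integrable by A\ref{ass6}. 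In my view the principal obstacle is the Besov-norm step: the two bounds on $q_\varepsilon$ must be balanced carefully to extract the sharp exponent, and the pointwise estimate in $(t, x)$ needs to be upgraded to a uniform one. The latter is handled by fixing the explicit version $\widetilde\eta$ supplied by Lemma~\ref{lmessih} and verifying its joint continuity in $(t, x)$.
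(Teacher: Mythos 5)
Your proposal follows essentially the same route as the paper's proof: the same decomposition into a drift term closed by Gronwall and the stochastic term $g=q_\varepsilon$, the same two complementary bounds on $q_\varepsilon$ (the uniform $C\sqrt{\varepsilon}$ from Lemma~\ref{lmpers} with A\ref{ass5}, A\ref{ass8}, and the $C|h|^{\beta(\sigma)}$ increment bound via (\ref{estpx})), the same balancing of these to get the Besov norm of order $\varepsilon^{\gamma_1}$ with $\alpha\to 1/2+$, and the same Cauchy--Schwarz plus Lemma~\ref{lmfkmu} argument for the a.~s.\ finiteness of the sums over $j$. The only deviations are cosmetic: you take a minimum of the two bounds and split the $r$-integral at $r_0=\varepsilon^{1/(2\beta(\sigma))}$ where the paper interpolates geometrically with an exponent $\theta$, and your polynomial weights in $j$ differ by a harmless factor of two in the exponent.
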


\begin{proof}
We take the versions of stochastic integrals defined by Lemma~\ref{lmessih} and continuous versions of $u_{\varepsilon}$ and $\bar{u}$.

\textbf{Step 1.} First, we estimate
\begin{eqnarray*}
\xi_{\varepsilon}=\int_{\rr} d\mu(y) \int_0^t {p}(t-s, x-y)\sigma(s/\varepsilon, y)ds \\
- \int_{\rr} d\mu(y) \int_0^t {p}(t-s, x-y)\bar{\sigma}(y)ds.
\end{eqnarray*}
We will show that
\begin{equation}\label{eqesxi}
|{\xi_\varepsilon}|\le C(\omega){\varepsilon}^{\gamma_1}\ \textrm{a.~s.}
\end{equation}

For fixed $z=(t,x)$ denote
$$
g(z,y)=\int_0^t {p}(t-s,x-y)(\sigma(s/\varepsilon, y)-\bar{\sigma}(y))\,ds.
$$
Thus,
\begin{equation}\label{eqxiva}
\xi_{\varepsilon}=\int_{\rr} g(z,y)\,d\mu(y)=\sum_{j\in\zz}\int_{(j, j+1]} g(z,y)\,d\mu(y),
\end{equation}
and we will estimate $\int_{(j, j+1]} g(z,y)\,d\mu(y)$ using the Besov space norm, inequality~(\ref{eqesbs}), and Lemma~\ref{lmessih}. Consider
\begin{eqnarray*}
g(z,y+h)-g(z,y)\\
 =C\int_0^t \frac{e^{-\frac{|x-y|^2}{4(t-s)}}}{\sqrt{t-s}}(\sigma(s/\varepsilon,
y+h)-\sigma(s/\varepsilon, y)-\bar{\sigma}(y+h)+\bar{\sigma}(y))\,ds\\
+ C\int_0^t \frac{\sigma(s/\varepsilon, y+h)-\bar{\sigma}(y+h)}{\sqrt{t-s}}(e^{-\frac{|x-y-h|^2}{4(t-s)}}-
e^{-\frac{|x-y|^2}{4(t-s)}})\,ds
=: I_1+I_2 .
\end{eqnarray*}
We have
\begin{eqnarray*}
\int_0^t \frac{e^{-\frac{|x-y|^2}{4(t-s)}}}{\sqrt{t-s}}(\sigma(s/\varepsilon,
y+h)-\sigma(s/\varepsilon, y))\,ds
\stackrel{{A\ref{ass5}}}{\le} L_{\sigma}h^{\beta(\sigma)}\int_0^t \frac{ds}{\sqrt{t-s}}=C h^{\beta(\sigma)}.
\end{eqnarray*}
By similar way we estimate terms with $\bar{\sigma}$ and obtain that $|I_1|\le C h^{\beta(\sigma)}$.

Note that for any $b>0$
\begin{equation}
\int_0^t\frac{1}{v} e^{-\frac{b}{v}} dv \stackrel{z=b/v}{=}\int_{b/t}^{\infty}\frac{e^{-z}}{z}dz
\le \int_{b/t}^1\frac{dz}{z}+\int_1^{\infty} e^{-z}dz
\le \left|\ln{\frac{T}{b}}\right|+1.\label{eqestr}
\end{equation}

Further, for $|x-y|\le 2$ we have
\begin{eqnarray*}
|I_2|\stackrel{{A\ref{ass5}}}{\le} C \int_0^t \frac{|e^{-\frac{|x-y-h|^2}{4(t-s)}}-
e^{-\frac{|x-y|^2}{4(t-s)}}|}{\sqrt{t-s}}\,ds
=C\int_0^t ds \Bigl|\int_{x-y-h}^{x-y}\frac{\partial p(t-s,r)}{\partial r}\,dr\Bigr|\\
\stackrel{(\ref{estpx})}{\le} C \int_0^t ds \int_{x-y-h}^{x-y}\frac{e^{-\frac{\lambda r^2}{t-s}}}{t-s}dr
\stackrel{v=t-s}{=} C \int_{x-y-h}^{x-y}dr \int_0^t \frac{e^{-\frac{\lambda r^2}{v}}}{v}dv\\
\stackrel{(\ref{eqestr})}{\le}
C\int_{x-y-h}^{x-y}\left(\left|\ln{\frac{T}{\lambda r^2}}\right|
+1\right)dr
\stackrel{|x-y|\le 2}{\le} C|h\ln h|.
\end{eqnarray*}
If $|x-y|> 2$ then $|x-y-h|> 1$, and
$$
|I_2|{\le} C \int_0^t ds \int_{x-y-h}^{x-y}\frac{e^{-\frac{\lambda r^2}{t-s}}}{t-s}dr\stackrel{r\ge 1}{\le} Ch.
$$
In both cases, $|I_2|\le Ch^{\beta(\sigma)}$ (recall that $\beta(\sigma)<1$). We arrive at
\begin{equation}\label{est2}
(w_2(g,r))^2\leq 2\sup\limits_{0\leq h\leq r}\int_{j}^{j+1-h}I_1^2dy+
2\sup\limits_{0\leq h\leq r}\int_{j}^{j+1-h}I_2^2dy\le Cr^{2\beta(\sigma)}.
\end{equation}

Also, we need to estimate $w_2(g,r)$ using the value of~$\varepsilon$.
From A\ref{ass5}, A\ref{ass8}, and Lemma~\ref{lmpers} it follows that
\begin{equation}\label{eqggce}
|g(z,y+h)|+|g(z,y)|\le C\sqrt{\varepsilon}.
\end{equation}
Therefore
\begin{equation}\label{est3}
(w_2(g,r))^2\le C\varepsilon.
\end{equation}

Multiply (\ref{est2}) in power $1-\theta$ and (\ref{est3})
in power $\theta$ for some $0<\theta<1$, and obtain
$
 (w_2(g,r))^2\leq Cr^{2\beta(\sigma)(1-\theta)}\varepsilon^{\theta}.
$
For finiteness of integral in (\ref{eqbssn}) we need
$$
2\beta(\sigma)(1-\theta)>2\alpha \Leftrightarrow \beta(\sigma)\theta<\beta(\sigma)-\alpha.
$$
For  $\alpha\rightarrow1/2+$, we get
$\theta\rightarrow \Bigl(1-\frac{1}{2\beta(\sigma)}\Bigr)-$.

We have $\gamma_1< 1/2$, therefore from (\ref{eqggce}) we obtain
\[
 |g(z,j)|\leq C\varepsilon^{\gamma_1},\qquad
 \|g(z,\cdot)\|_{L_2([j,j+1])}\leq C\varepsilon^{\gamma_1}.
\]

Thus, for any $\gamma_1<\frac{1}{2}\Bigl(1-\frac{1}{2\beta(\sigma)}\Bigr)$ exists $\alpha>1/2$ such that
\[
 \|g(z,\cdot)\|_{B_{22}^\alpha([j,j+1])}\leq C \varepsilon^{\gamma_1}.
\]

Using (\ref{eqxiva}), (\ref{eqesqm}), (\ref{eqesbs}), and the Cauchy--Schwarz inequality, get
\begin{eqnarray*}
 |\xi_{\varepsilon}|
\le \sum_{j\in\zz}\left|\int_{(j, j+1]} g(z,y)\,d\mu(y)\right|
 \leq\sum_{j\in\zz} \left|g(z,j)\mu\left((j, j+1]\right)\right|\\
+ C \sum_{j\in\zz} \left\|g(z,\cdot)\right\|_{B^\alpha_{22}\left([j, j+1]\right)}
\left\{\sum_{n\ge 1}2^{n(1-2\alpha)}
\sum_{1\le k\le 2^{n}}\left|\mu\left(\Delta_{kn}^{(j)}\right)\right|^2 \right\}^{1/2}\\
\leq C \varepsilon^{\gamma_1}\left[\sum_{j\in\zz} \left|\mu\left((j, j+1]\right)\right|
+\sum_{j\in\zz} \left\{\sum_{n\ge 1}2^{n(1-2\alpha)}\sum_{1\le k\le 2^{n}}
\left|\mu\left(\Delta_{kn}^{(j)}\right)\right|^2 \right\}^{1/2}\right] \\
\leq C \varepsilon^{\gamma_1}
\left[ \left(\sum_{j\in\mathbb{Z}}(|j|+1)^{\rho}\left(\mu\left((j, j+1]\right)\right)^2
\right)^{1/2}\left(\sum_{j\in\mathbb{Z}}(|j|+1)^{-\rho}\right)^{1/2}
\right.\\
+\left. \left(\sum_{j\in\zz}\sum_{n\ge 1} \sum_{1\le k\le 2^n}
(|j|+1)^{\rho} 2^{n(1-2\alpha)}
\left|\mu\left({\Delta_{kn}^{(j)}}\right)\right|^2\right)^{1/2}
\left(\sum_{j\in\mathbb{Z}}(|j|+1)^{-\rho}\right)^{1/2}\right].
\end{eqnarray*}

For all $\rho>1$ we have $\sum_{j\in\mathbb{Z}}(|j|+1)^{-\rho}<+\infty$, and sum with stochastic integrals are
 $\sum_{l=1}^{\infty}\left(\int_{\sf x} \phi_l\,d\mu \right)^2$, where
\begin{eqnarray*}
\left\{\phi_l (y),\ l\geq 1\right\}= \left\{\left(|j|+1\right)^{\rho/2}\, {\bf 1}_{(j, j+1]}(y),\
j\in \zz\right\},\\
\left\{\phi_l (y), l\ge 1\right\}= \Bigl\{\left(|j|+1\right)^{\rho/2}2^{n(1-2\alpha)/2}
{\bf 1}_{\Delta_{kn}^{(j)}}(y),j\in \zz, n\ge 1,
 1\le k\le 2^{n}\Bigr\}.
\end{eqnarray*}
From $\sum_{l=1}^\infty|\phi_l(y)|\le C(|y|^{\tau}+1)$
for $\tau=\rho/2$, using integrability of $|y|^\tau$, by Lemma~\ref{lmfkmu}
we arrive at~(\ref{eqesxi}).

\textbf{Step 2.} Using Step~1, A\ref{ass3}, and equality $\int_{\rr}{p}(s, y)\,dy=1$, obtain
\begin{eqnarray*}
|u_{\varepsilon}(t, x)- \bar{u}(t, x)|\\
\le \Bigl|\int_0^t ds \int_{\rr}{p}(t-s, x-y)(f(y,
u_{\varepsilon}(s, y))-f(y, \bar{u}(s, y)))\,dy\Bigr|
+ C(\omega) {\varepsilon}^{\gamma_1}\\
{\le} L_f \int_0^t ds \int_{\rr}{p}(t-s, x-y)|u_{\varepsilon}(s, y)-\bar{u}(s,
y)|\,dy+ C(\omega) {\varepsilon}^{\gamma_1}\\
\le  L_f \int_0^t\sup_{y\in\rr}|u_{\varepsilon}(s, y)-\bar{u}(s, y)|  ds \int_{\rr}{p}(t-s, x-y)\,dy+ C(\omega) {\varepsilon}^{\gamma_1}\\
=L_f \int_0^t\sup_{y\in\rr}|u_{\varepsilon}(s, y)-\bar{u}(s, y)| \,ds+ C(\omega) {\varepsilon}^{\gamma_1}.
\end{eqnarray*}

Thus, we get
\[
\sup_{x\in\rr}|u_{\varepsilon}(t, x)- \bar{u}(t, x)|\le L_f \int_0^t\sup_{y\in\rr}|u_{\varepsilon}(s, y)-\bar{u}(s, y)| \,ds+C(\omega){\varepsilon}^{\gamma_1}.
\]
By Gronwall's inequality we obtain
$
\sup_{x\in\rr}|u_{\varepsilon}(t, x)- \bar{u}(t, x)|\le C(\omega){\varepsilon}^{\gamma_1},
$
that finishes the proof. \end{proof}

\textbf{Remark.} It was assumed that $\beta(\sigma)>\frac{1}{2}$, therefore we can choose $\gamma_1>0$. For smooth $\sigma$ we can take any $\gamma_1<\frac{1}{4}$. The order of convergence equal to $\frac{1}{4}$ was obtained in~\cite{peip17} for the system driven by Brownian motion and Poisson random measure. Convergence rate equal to $\frac{1}{2}-$ was achieved in \cite{breh12} and \cite{wang12} for systems driven by Brownian motions only.

\section*{Acknowledgments}

This work was supported by Alexander von Humboldt Foundation, grant 1074615.
The author is grateful to Prof. M.~Z\"{a}hle for fruitful discussions during the preparation of this paper and thanks the Friedrich-Schiller-University of Jena for its hospitality.

\bigskip

\textsc{Department of Mathematical Analysis, Taras Shevchenko National University of Kyiv, Kyiv 01601, Ukraine}\\
\emph{E-mail adddress}: \verb"vradchenko@univ.kiev.ua"

\end{document}